\documentclass[a4paper, 12pt]{article}
\usepackage{graphicx}
\usepackage{amssymb,amsfonts,amsthm, amscd}
\usepackage{amsmath}
\usepackage{hyperref}
\usepackage[mathcal]{eucal}

\usepackage[left=3cm,right=3cm,
    top=3cm,bottom=3cm,bindingoffset=0cm]{geometry}

\newtheorem{theorem}{{\sc Theorem}}[section]

\newtheorem{lemma}{{\sc Lemma}}[section]
\newtheorem{cor}{{\sc Corollary}}[section]

\newtheorem{definition}{{\sc Definition}}[section]

\begin{document}

\author{Tigran Hakobyan}
\title{\textbf {On the $P_1$ property of sequences of positive integers}} 
 \date{}
\maketitle

\begin{flushleft}
\textbf{\ \ \  Abstract.} \ It is well-known that for any non-constant polynomial $P$ with integer coefficients the sequence $(P(n))_{ n\in \mathbb N}$ has the property that there are infinitely many prime numbers dividing at least one term of this sequence. Certainly, there is a proof based on the Chinese Remainder Theorem. In this paper we give proofs of two analytic criteria revealing this property of sequences.
\end{flushleft}
\subsection{Introduction}
\begin{definition}
We say that a sequence $(n_k)_{k\in \mathbb N}$ of positive integers has the $P_1$ property (we write $(n_k)_{k\in \mathbb N}\in P_1 $) if there exist infinitely many prime numbers dividing at least one term of this sequence. The main results of this paper are the following two theorems.
\end{definition}
\begin{theorem}
If $(n_k)_{k\in \mathbb N}$ is an increasing sequence of positive integers and $$\liminf_{k\rightarrow\infty}{ \frac{\ln(\ln(n_k))}{\ln(k)}}=0$$ then $(n_k)_{k\in \mathbb N}\in P_1$. 
\end{theorem}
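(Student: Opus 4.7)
The plan is to argue by contradiction and translate the $P_1$ property into a counting statement about smooth numbers. Suppose only finitely many primes, say $p_1,\dots,p_r$, divide some term of the sequence. Then every $n_k$ can be written as $p_1^{a_1}\cdots p_r^{a_r}$ with non-negative integer exponents $a_i$. Because $(n_k)$ is strictly increasing, there are at least $k$ such numbers that are $\le n_k$, so it suffices to bound from above the number
\[
N_r(x) \;:=\; \#\bigl\{(a_1,\dots,a_r)\in\mathbb{Z}_{\ge 0}^r : p_1^{a_1}\cdots p_r^{a_r}\le x\bigr\}.
\]

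Taking logarithms, $N_r(x)$ equals the number of lattice points in the simplex $\{(a_1,\dots,a_r)\in\mathbb{R}_{\ge 0}^r : a_1\ln p_1+\cdots+a_r\ln p_r \le \ln x\}$. A standard comparison with the volume of this simplex (enlarged slightly to absorb the lattice-point error) yields a bound of the form $N_r(x)\le C_r(\ln x)^r$ for all $x\ge e$, where $C_r$ depends only on $r$ and the fixed primes $p_i$. Applying this with $x=n_k$ gives $k\le C_r(\ln n_k)^r$, hence
\[
\ln n_k \;\ge\; c_r\, k^{1/r}, \qquad \ln\ln n_k \;\ge\; \tfrac{1}{r}\ln k + O(1),
\]
for $k$ large. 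Therefore $\liminf_{k\to\infty}\frac{\ln\ln n_k}{\ln k}\ge \frac{1}{r}>0$, which contradicts the hypothesis and completes the proof.

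I expect the argument to be essentially routine; the only step that requires a modicum of care is the smooth-number count $N_r(x)=O((\ln x)^r)$, which is the one place the fixed set of primes enters quantitatively. Everything else is formal algebra on the inequality $k\le C_r(\ln n_k)^r$. No deeper tool (e.g.\ the Chinese Remainder Theorem used in the classical polynomial argument mentioned in the abstract) is needed here, since the hypothesis on $\liminf$ is strong enough that a crude lattice-point bound already beats it.
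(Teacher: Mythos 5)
Your proposal is correct and follows essentially the same route as the paper: contradiction, reduction to counting lattice points in the simplex $\sum a_i\ln p_i\le\ln x$, the bound $O((\ln x)^r)$ (the paper cites an explicit inequality $N\le (W+\sum w_i)^n/(n!\prod w_i)$ for this), and the resulting estimate $k\le C_r(\ln n_k)^r$ forcing the liminf to be at least $1/r$.
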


\begin{theorem}
 If $(n_k)_{k\in \mathbb N}$ and $(m_k)_{k\in \mathbb N}$ are increasing sequences of positive integers  such that $gcd(n_k,n_{k+l})<m_l$ for all positive integers $k$ and $l$ then $(n_k)_{k\in \mathbb N}\in P_1$.
\end{theorem}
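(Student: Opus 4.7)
The plan is to argue by contradiction: assume $(n_k)\notin P_1$, so only finitely many primes $p_1, \ldots, p_s$ divide the terms of the sequence. Then every $n_k$ factors as $n_k = p_1^{a_1(k)} \cdots p_s^{a_s(k)}$ with $a_i(k) \in \mathbb{Z}_{\geq 0}$.

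I would fix an integer $L > s$ and, for each $i \in \{1, \ldots, s\}$, introduce
$$\mathcal{A}_i = \left\{ k \in \mathbb N : a_i(k) \geq \left\lceil \log_{p_i} m_L \right\rceil \right\}.$$
The first key step is to show that no two elements of $\mathcal{A}_i$ can lie within distance $L$ of each other. Indeed, if $k < k'$ with $l := k'-k \leq L$ were both in $\mathcal{A}_i$, then
$$\gcd(n_k, n_{k'}) \geq p_i^{\min(a_i(k), a_i(k'))} \geq p_i^{\lceil \log_{p_i} m_L \rceil} \geq m_L \geq m_l,$$
contradicting the hypothesis $\gcd(n_k, n_{k'}) < m_l$. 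Consequently $|\mathcal{A}_i \cap [1,K]| \leq K/L + 1$ for every $K$.

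The second key step is to notice that if an index $k$ belongs to none of the $\mathcal{A}_i$, then $a_i(k) < \lceil \log_{p_i} m_L \rceil$ for every $i$, so $n_k$ is bounded by the constant $M := \prod_{i=1}^s p_i^{\lceil \log_{p_i} m_L \rceil - 1}$. Since $n_k \to \infty$, only finitely many indices $k \leq K_0$ can avoid $\bigcup_i \mathcal{A}_i$.

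Combining these bounds, for any $K > K_0$ we obtain
$$K - K_0 \leq \left|\bigcup_{i=1}^s \mathcal{A}_i \cap [1,K]\right| \leq \sum_{i=1}^s |\mathcal{A}_i \cap [1,K]| \leq s\left( K/L + 1 \right),$$
hence $K(1 - s/L) \leq K_0 + s$. Because $L > s$, the coefficient $1 - s/L$ is strictly positive, bounding $K$ uniformly and contradicting $K \to \infty$. There is no significant obstacle beyond recognizing these $L$-separated sets: the gcd condition forces each coordinate of the exponent vector to be large only on a sparse set of indices, while the failure of $P_1$ forces the union of these $s$ sparse sets to be cofinite, an incompatibility that manifests as soon as $L > s$.
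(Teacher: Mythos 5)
Your proof is correct and follows essentially the same strategy as the paper: the gcd hypothesis forces each set of indices where a fixed prime's exponent is large to have gaps exceeding $L$, while $n_k\to\infty$ forces the union of these $s$ sparse sets to be cofinite, which is impossible once $L>s$. The only (cosmetic) differences are that you use per-prime thresholds $\lceil\log_{p_i}m_L\rceil$ and all $s$ primes, whereas the paper first restricts to the primes with unbounded valuation and uses a single threshold $M$ with $2^M>m_l$; your variant slightly streamlines the finiteness of the exceptional set.
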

\subsection{Proof of theorem 1}
Suppose we are given positive numbers $w_1, w_2,...,w_n$ where $n\in\mathbb N$. 
\begin{definition} For any $W>0$ define $N(W;w_1,w_2,...,w_n)=$card$\{(k_1, k_2,...,k_n)|k_i\geq 0$, $1\leq i\leq n$, $\sum_{i=1}^nk_iw_i\leq W\}.$
\end{definition}
We will use the inequality $$N(W;w_1,w_2,...,w_n)\leq \frac{(W+\sum_{i=1}^n w_i)^n}{n!\prod_{i=1}^n w_i}$$  mentioned in [5].
\begin{proof}
Now suppose $(n_k)_{k\in \mathbb N}\notin P_1$. So there is a finite set $S=\{p_1,p_2,...,p_n\}$ consisting of prime numbers such that each term of $(n_k)_{k\in \mathbb N}$ is a product of some, not necessary distinct elements from $S$. 
\begin{definition}
For each $l\in N$ let us define $t_l=$card$\{k|1\leq k\leq n, n_k\leq l\}$.
\end{definition}
Hence $t_l\leq$card$\{(k_1, k_2,...,k_n)|k_i\geq 0$, $1\leq i\leq n$, $\prod_{i=1}^n p_i^{k_i}\leq l\}$ which is equivalent to $t_l\leq$card$\{(k_1, k_2,...,k_n)|k_i\geq 0$, $1\leq i\leq n$, $\sum_{i=1}^n \ln(p_i)k_i\leq \ln(l)\}$ where the latter number is $N(\ln(l),\ln(p_1),\ln(p_2),...,\ln(p_n))$ due to definition 2. \newline
Since $$N(W;w_1,w_2,...,w_n)\leq \frac{(W+\sum_{i=1}^n w_i)^n}{n!\prod_{i=1}^n w_i}$$ we therefore have that there is $c>0$ such that $$N(W;w_1,w_2,...,w_n)\leq cW^n$$ for all $W>\delta>0$.  
Consequently for some $a>0$ $$t_l\leq N(\ln(l),\ln(p_1),\ln(p_2),...,\ln(p_n))\leq a(\ln(l))^n$$ for all $l\in N$, $l\geq 2$.\newline 
If we substitute $l=n_k$ for $k=2,3,...$ we will get that $$k=t_{n_k}\leq a(\ln(n_k))^n$$ hence $$\ln(k)\leq \ln(a)+n\ln(\ln(n_k)),  \ k=2,3,...$$ \newline
Thereby $$\liminf_{k\rightarrow\infty}{ \frac{\ln(\ln(n_k))}{\ln(k)}}\geq 1/n>0$$ which is a contradiction.\newline
So, $(n_k)_{k\in \mathbb N}\in P_1$ and the theorem is proved.
\end{proof}
\begin{cor}
 For any non-constant polynomial $P$ with integer coefficients the sequence $(P(n))_{n\in \mathbb N}\in P_1$.
\end{cor}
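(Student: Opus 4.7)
The plan is to reduce the corollary directly to Theorem 1 by verifying its hypothesis on the sequence $n_k = P(k)$. The only subtleties are sign, monotonicity, and the usual issue that the theorem is stated for sequences of positive integers indexed in a strictly increasing way, so some initial truncation is needed.

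First I would dispose of a trivial case: if $P$ has an integer root $m \geq 1$, then $P(m) = 0$ and every prime divides $P(m)$, so $(P(n))_{n \in \mathbb{N}} \in P_1$ for free. Otherwise, by replacing $P$ by $-P$ if necessary (this does not change which primes divide the terms), I may assume the leading coefficient of $P$ is positive. Then there exists $N \in \mathbb{N}$ such that for all $n \geq N$ the values $P(n)$ are positive integers and $P$ is strictly increasing on $\{N, N+1, \dots\}$. Dropping the first $N-1$ terms of $(P(n))$ affects the set of primes dividing terms only by a finite amount, so it suffices to establish the $P_1$ property for the increasing sequence of positive integers $n_k := P(k + N - 1)$, $k \in \mathbb{N}$.

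Next I would verify that $(n_k)$ satisfies the hypothesis of Theorem 1. Writing $d = \deg P \geq 1$ and $c > 0$ for the leading coefficient, the standard asymptotic $P(k + N - 1) \sim c\, k^d$ yields $\ln(n_k) = d\ln(k) + O(1)$, hence
\[
\ln\ln(n_k) = \ln\ln(k) + O(1) \qquad (k \to \infty).
\]
Dividing by $\ln(k)$ and using $\ln\ln(k)/\ln(k) \to 0$ gives
\[
\liminf_{k \to \infty} \frac{\ln\ln(n_k)}{\ln(k)} = 0.
\]
Theorem 1 then applies and yields $(n_k)_{k \in \mathbb{N}} \in P_1$, which by the remark above forces $(P(n))_{n \in \mathbb{N}} \in P_1$.

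There is no genuinely hard step: the content of the corollary is entirely absorbed into Theorem 1, which already accommodates any subexponential growth, and polynomial growth sits very comfortably below that threshold. The only mild care is in the reduction to an increasing positive sequence, which is handled by truncation and a possible sign change; once that is done, the double-logarithm estimate is immediate from $\ln(P(n)) = d\ln(n) + O(1)$.
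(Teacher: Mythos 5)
Your proof is correct and follows the same route as the paper: reduce to Theorem 1 by noting that $(P(n))$ is eventually an increasing sequence of positive integers (after a possible sign change and truncation) with $\ln\ln P(k) = \ln\ln k + O(1)$, so the $\liminf$ in Theorem 1 vanishes. You simply spell out the truncation and sign details that the paper leaves implicit.
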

\begin{proof}
The sequence $(P(n))_{n\in \mathbb N}$ is eventually monotone and
$$\lim_{k\rightarrow\infty}{ \frac{\ln(\ln(P(k)))}{\ln(k)}}=0.$$ It remains to use theorem 1.

\end{proof}

\subsection{Proof of theorem 2}
\begin{proof}
Suppose $(n_k)_{k\in \mathbb N}\notin P_1$. So there is a finite set $S=\{p_1,p_2,...,p_s\}$ consisting of prime numbers $p_1<p_2<...<p_s$ such that each term of $(n_k)_{k\in \mathbb N}$ is a product of some, not necessary distinct elements from $S$. Since $(n_k)_{k\in \mathbb N}$ is increasing it is unbounded hence there is at least one $p\in S$ such that $(\nu_p(n_k))_{k\in \mathbb N}$ is unbounded, where 
$\nu_{p}(m)=\max\{k:p^k|m\}$ for any integer $m$ and prime number $p$. WLOG we may assume that the set of such primes $p$ is $\{p_1,p_2,...,p_l\}$, for some $1\leq l\leq s$.
\begin{definition}
For each $1\leq t\leq l$ and $M\in\mathbb N$ we define $$A_t(M)=\{k|\nu_{p_t}(n_{k})>M\}=(s_{t,j})_{j\in \mathbb N}$$.
\end{definition}
\begin{cor}
$$\mathbb N=\bigcup_{t=1}^l A_t(M)\cup A_M,$$ for some finite set $A_M$. Moreover, $A_M=\{k|\nu_{p_t}(n_k)\leq M, t=1,2,...,l\}$.
\end{cor}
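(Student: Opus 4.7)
The plan is to verify both claims directly from the definitions, with the finiteness of $A_M$ being the only substantive point.

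First I would handle the set-theoretic identity $\mathbb N = \bigcup_{t=1}^l A_t(M) \cup A_M$ as a tautology. For any $k \in \mathbb N$ there are exactly two possibilities: either $\nu_{p_t}(n_k) > M$ for at least one index $t \in \{1,\dots,l\}$, in which case $k \in A_t(M)$ for that $t$, or $\nu_{p_t}(n_k) \le M$ for every $t \in \{1,\dots,l\}$, in which case $k$ belongs to the set $\{k \mid \nu_{p_t}(n_k) \le M,\ t=1,\dots,l\}$. This already pins down the description of $A_M$ claimed in the ``moreover'' part.

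The only real content is the finiteness of $A_M$. Here I would use the standing assumption that $\{p_1,\dots,p_l\}$ exhausts the primes in $S$ for which $(\nu_{p}(n_k))_{k\in\mathbb N}$ is unbounded. Consequently, for each $t \in \{l+1,\dots,s\}$ there exists a constant $C_t$ with $\nu_{p_t}(n_k) \le C_t$ for all $k$. For any $k \in A_M$ we then have
$$
n_k \;=\; \prod_{t=1}^{s} p_t^{\nu_{p_t}(n_k)} \;\le\; \Bigl(\prod_{t=1}^{l} p_t^{M}\Bigr)\Bigl(\prod_{t=l+1}^{s} p_t^{C_t}\Bigr),
$$
so the $n_k$ with $k\in A_M$ are uniformly bounded. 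Since $(n_k)$ is strictly increasing, only finitely many indices $k$ can satisfy such a bound, whence $A_M$ is finite.

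I do not expect any serious obstacle: the covering statement is essentially a restatement of the definition of $A_M$, and the finiteness argument is the standard observation that a strictly increasing sequence whose terms are drawn from a finite set of positive integers (here, products $\prod p_t^{a_t}$ with $a_t$ in a finite range) must itself be finite. The one point that must be made carefully is that the bounds $C_{l+1},\dots,C_s$ depend only on the sequence and not on $M$, so that the product above is indeed finite once $M$ is fixed.
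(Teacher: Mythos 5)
Your proof is correct and is exactly the argument the paper intends (the paper states this corollary without proof): the covering identity is definitional, and finiteness of $A_M$ follows because the valuations at $p_1,\dots,p_l$ are capped by $M$ while those at $p_{l+1},\dots,p_s$ are bounded by hypothesis, so $n_k$ is bounded on $A_M$ and strict monotonicity finishes it. No gaps.
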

Let us choose $M$ large enough to satisfy $2^M>m_l.$%, where $m=\sup_{k\neq n}\frac{gcd(a_k,a_n)}{|k-n|}.$ 
\begin{lemma}
${s_{t,j+1}}-{s_{t,j}}>l$ for all $t\in\{1,2,...,l\}$ and $j\in \mathbb N$.
\end{lemma}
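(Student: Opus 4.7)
The plan is to derive a contradiction from the assumption $s_{t,j+1}-s_{t,j}\le l$ by exploiting the hypothesis $\gcd(n_k,n_{k+r})<m_r$ together with the choice $2^M>m_l$. Fix $t\in\{1,\ldots,l\}$ and $j\in\mathbb N$, and set $k=s_{t,j}$ and $r=s_{t,j+1}-s_{t,j}$; the aim is to show $r>l$.

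First I would observe that by the definition of $A_t(M)$, both $n_k$ and $n_{k+r}$ are divisible by $p_t^{M+1}$, so $p_t^{M+1}$ divides $\gcd(n_k,n_{k+r})$. Since $p_t\ge 2$, this gives the lower bound
\[
\gcd(n_k,n_{k+r})\ge p_t^{M+1}\ge 2^{M+1}>2^M>m_l.
\]
On the other hand, the hypothesis of Theorem 2 applied to the indices $k$ and $r$ yields the upper bound $\gcd(n_k,n_{k+r})<m_r$.

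Now I would suppose for contradiction that $r\le l$. Since $(m_k)_{k\in\mathbb N}$ is increasing, $m_r\le m_l$, so combining the two bounds gives $m_l<\gcd(n_k,n_{k+r})<m_r\le m_l$, which is impossible. Hence $r>l$, proving the lemma.

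I do not expect any serious obstacle here: the lemma is essentially a direct bookkeeping consequence of the gcd hypothesis, the definition of $A_t(M)$, and the calibration $2^M>m_l$ chosen just before the statement. The only point to be careful about is to apply the hypothesis with the correct index shift (the gap $r$ plays the role of $l$ in the hypothesis of Theorem 2, while the letter $l$ in the lemma refers to the number of "unbounded" primes fixed earlier) and to invoke the monotonicity of $(m_k)$ to pass from $m_r$ to $m_l$.
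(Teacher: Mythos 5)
Your proof is correct and follows essentially the same route as the paper: lower-bound the gcd by a power of $p_t$ exceeding $2^M>m_l$, upper-bound it by $m_r$ via the hypothesis of Theorem 2, and conclude $r>l$ from the monotonicity of $(m_k)$. The only cosmetic difference is that you phrase it as a contradiction and use $p_t^{M+1}$ where the paper is content with $p_t^M$; both are valid.
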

\begin{proof}
One has that $p_t^M|n_{s_{t,j+1}}$ and $p_t^M|n_{s_{t,j}}$, so $$m_{({s_{t,j+1}}-{s_{t,j}})}>gcd(n_{s_{t,j+1}},n_{s_{t,j}})\geq {p_t}^M\geq 2^M>m_l$$ hence ${s_{t,j+1}}-{s_{t,j}}>l$ as desired.
\end{proof}
Therefore, for any $t\in\{1,2,...,l\}$ and $N\in \mathbb N$ there are at most $([\frac{N}{l+1}]+1)$ elements of $A_t(M)$ in $\{1,2,...,N\}$. Hence for each $N$ there are at least $$\Delta(N)=N-l([\frac{N}{l+1}]+1)>\frac{N}{l+1}-l$$ elements of $\{1,2,...,N\}$ which are not in $\bigcup_{t=1}^l A_t(M)$. Now notice that $\Delta(N)\rightarrow\infty$, hence $A_M$ is infinite, which is a contradiction. \newline 
So, $(n_k)_{k\in \mathbb N}\in P_1$ and the theorem is now proved.
\end{proof}
\newpage


\begin{thebibliography}{99}





\bibitem{1}K.Chandrasekharan, \textit{Introduction to analytic number theory}, Springer, (1968).

\bibitem{1}I.M.Vinogradov, \textit{Elements of number theory}, Dover Publ., New York, (2003).

\bibitem{3}A. A. Buhshtab, \textit{Theory of Numbers} (in russian), Moscow, (1974),

\bibitem{4}E.Ram Murty, \textit{Problems in analytical number theory}, Springer, 1998.

\bibitem{5}Padberg, Manfred W.. 1971. "A Remark on``An Inequality for the Number of Lattice Points in a Simplex''. SIAM Journal on Applied Mathematics 20 (4)

\end{thebibliography}
\end{document}